\documentclass[11pt, a4paper]{amsart}

\usepackage[foot]{amsaddr}  

\usepackage[margin=2.54cm]{geometry}
\usepackage{fancyhdr}

\usepackage{amsmath, amssymb, amsthm}
\usepackage{array}
\usepackage{latexsym}
\usepackage{nicematrix}
\usepackage{bigstrut}
\usepackage{enumerate}
\usepackage{paralist}

\usepackage{mathrsfs}

\usepackage{float}   
\usepackage{subcaption}  
\usepackage{pgf,tikz}
\usetikzlibrary{decorations.pathreplacing,calligraphy}
\usepackage{graphicx}
\usepackage[all]{xy}
\usetikzlibrary{arrows}
\usetikzlibrary{arrows.meta}

\usepackage{url}
\usepackage[colorlinks=true,citecolor=cyan,backref=page]{hyperref}
\usepackage[capitalise]{cleveref}

\usepackage{fdsymbol}
\usepackage{todonotes}
\setuptodonotes{size=\tiny}

\makeatletter
\define@key{todonotes}{AS}[]{%
	\setkeys{todonotes}{color=blue!50}}%

\makeatother

\newcommand{\defn}[1]{{\bf#1}}

\renewcommand{\int}{{\mathord{\hookrightarrow}}}

\newtheorem{theorem}{Theorem}

\newtheorem{lemma}[theorem]{Lemma}

\theoremstyle{definition}

\newtheorem{remark}[theorem]{Remark}

\usepackage{todonotes}

\title{Shellability of semidistributive lattices}

\author[S.~Chen \&  A.~Segovia]{Shuo~Chen$^{1}$ \and Adrien~Segovia$^{2}$}

\address{$^{1}$Institute of Theoretical Physics, Chinese Academy of Sciences}
\email{chenshuo223@mails.ucas.ac.cn}

\address{$^{2}$LACIM, Universit\'e du Qu\'ebec \`a Montr\'eal, Canada}
\email{adrien.segovia@gmail.com}

\begin{document}

\begin{abstract}
We prove that join-semidistributive shellable lattices are join-extremal. In particular, every semidistributive shellable lattice is extremal. Together with known results, this gives the equivalence between extremality, left modularity, EL-shellability and shellability for semidistributive lattices. This answers open questions of M\"uhle and Segovia. 
\end{abstract}
	
\maketitle

\section{Introduction}

\textit{In this article, all posets and lattices are assumed to be finite.}

Shellability of simplicial complexes has a long and fruitful history. We refer the reader to the papers of Bj\"orner and Wachs \cite{bjorner1983lexicographically,bjorner1996shellable,bjorner1997shellable} for details related to the following topological notions. A \defn{chain} of a poset $P$ is a totally ordered subset, and a chain is \defn{maximal} if it is not a proper subset of a larger chain. The \defn{order complex} $\Delta(P)$ of $P$ is the simplicial complex on vertex set $P$ whose faces are the chains of $P$. Thus, the facets of $\Delta(P)$ are the maximal chains of $P$. A simplicial complex is \defn{shellable} if there exists a linear order of its facets $F_1,F_2,\dots,F_k$ such that for all $1\leq i<j\leq k$, there exists $l<j$ such that $F_i\,\cap F_j \subseteq F_l\,\cap F_j$ and $|F_l\,\cap F_j|=|F_j|-1$. We call such a linear order on the facets a \defn{shelling}. We say that a poset $P$ is shellable if $\Delta(P)$ is shellable. Shellability is a useful topological property, but it can be difficult to establish:
determining whether a given pure simplicial complex is shellable is NP-complete
\cite{goaoc2019shellability}.

Many posets arising in algebraic combinatorics are lattices, often endowed with additional structure. A poset $L$ is a \defn{lattice} if every pair $\{x,y\}\subseteq L$ has a greatest lower bound, called the \defn{meet} and denoted by $x\wedge y$, and a least upper bound, called the \defn{join} and denoted by $x\vee y$. The main result of this paper reduces the shellability of an important class of lattices, namely semidistributive lattices, to a considerably simpler property to verify: extremality.

A lattice $L$ is \defn{join-semidistributive} if, for all $x,y,z\in L$, the equality $x\vee y=x\vee z$ implies $x\vee (y\wedge z)=x\vee y$.
The notion of a \defn{meet-semidistributive} lattice is defined dually, and $L$ is \defn{semidistributive} if it is both join-semidistributive and meet-semidistributive. Semidistributive lattices form a widely studied class of lattices that generalizes distributive lattices. Important examples of semidistributive lattices include the weak orders on finite Coxeter groups and lattices of torsion classes of finite-dimensional algebras.

An element $p\in L$ is \defn{join-irreducible} if it covers a unique element, denoted by $p_*$. The set of join-irreducibles of $L$ is denoted by $\mathrm{JIrr}(L)$. Dually, the \defn{meet-irreducibles} are the elements that are covered by a unique element, and their set is denoted by $\mathrm{MIrr}(L)$. The \defn{length} of a chain is its number of elements minus one, and the length of $L$, denoted by $\ell(L)$, is the maximal length of a chain of $L$. A lattice $L$ is \defn{join-extremal} if $\ell(L)=|\mathrm{JIrr}(L)|$, \defn{meet-extremal} if $\ell(L)=|\mathrm{MIrr}(L)|$, and \defn{extremal} if it is both join-extremal and meet-extremal \cite{MarkowskyExtremal}.

An \defn{edge-labeling} of a poset assigns to each cover relation a label
in a set. If the set of labels is totally ordered, such an edge-labeling is
an \defn{EL-labeling} if, in every interval $[x,y]$, there is a unique maximal chain whose labels are strictly increasing, and its label sequence is lexicographically smallest among those of all maximal chains in $[x,y]$. A poset is \defn{EL-shellable} if it admits an EL-labeling, and this property implies shellability \cite{bjorner1983lexicographically,bjorner1996shellable}.
For lattices, EL-shellability is itself implied by left modularity \cite{LiuLeftmodular}. An element $m\in L$ is \defn{left modular} if, for every $x,y\in L$ with $x\leq y$, we have $x\vee(m\wedge y)=(x\vee m)\wedge y$. A lattice $L$ is \defn{left modular} if it has a maximal chain consisting entirely of left-modular elements \cite{blass1997mobius}. 

For general lattices, extremality and left modularity are different properties, but remarkably they coincide for semidistributive lattices. M\"uhle asked whether every semidistributive EL-shellable lattice is left modular, and whether every such lattice is extremal \cite[Questions 4.4 and 4.5]{muhle2023extremality}. In \cite{segovia2025extremality}, the second author proved that a congruence uniform lattice (which is semidistributive) is shellable if and only if it is extremal, and asked whether the implication from shellability to extremality remains true for all semidistributive lattices. We answer these questions affirmatively.

\begin{figure}[t]
\centering
\begin{tikzpicture}[
    every node/.style={font=\small},
    box/.style={
        draw,
        rounded corners,
        minimum width=3cm,
        minimum height=.8cm,
        align=center
    }
]

\node[box] (lm) at (0,0) {left-modular};
\node[box] (el)  at (5,0) {EL-shellable};
\node[box] (sh)  at (10.5,0) {shellable};
\node[box] (ext)  at (0,-3) {extremal};

\draw[-{Latex}]
(lm) -- node[above,font=\scriptsize]{\cite{LiuLeftmodular}} (el);

\draw[-{Latex}]
(el) --node[above,font=\scriptsize]{\cite{bjorner1983lexicographically,bjorner1996shellable}} (sh);

\draw[blue,thick,-{Latex}]
(ext) to[bend left=20]
node[left,text=black,font=\scriptsize]
{\cite[Theorem 1.4]{Thomas_2019}} (lm);

\draw[blue,thick,-{Latex}]
(lm) to[bend left=20]
node[right,text=black,font=\scriptsize]
{\cite[Theorem 3.2]{muhle2023extremality}} (ext);

\draw[red,very thick,-{Latex}]
(sh) to[bend left=20]
node[below right,font=\scriptsize,align=center]
{Theorem~\ref{thm:main}} (ext);

\end{tikzpicture}

\caption{The black implications are true for any lattice, whereas the thick blue and red implications require the additional assumption that the lattices are semidistributive.
The red implication is established in this paper.}
\label{fig:main-implications}
\end{figure}

\begin{theorem}\label{thm:main}
Let $L$ be a join-semidistributive lattice. If $L$ is shellable, then $L$ is join-extremal. Thus, if $L$ is semidistributive and shellable, then $L$ is extremal.
\end{theorem}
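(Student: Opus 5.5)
The plan is to combine the edge-labeling that join-semidistributivity provides with the topological constraints that shellability places on every interval.

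\emph{Labeling.} Recall that in a join-semidistributive lattice every cover $x\lessdot y$ has a well-defined label $\lambda(x,y)$. It is the unique minimal element of $\{p\in L: p\le y,\ p\not\le x\}$, which is join-irreducible. Along any maximal chain $\hat 0=x_0\lessdot x_1\lessdot\cdots\lessdot x_r=\hat 1$ the labels are pairwise distinct: a label $p$ can only appear at the first step where $p\le x_i$. Every $p\in\mathrm{JIrr}(L)$ labels the cover $p_*\lessdot p$. These facts give $\ell(L)\le|\mathrm{JIrr}(L)|$ for free. So the whole task is to produce one maximal chain of length $|\mathrm{JIrr}(L)|$, or equivalently a maximal chain on which every join-irreducible occurs as a label.

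\emph{Topology.} The Björner--Wachs theory of nonpure shellability gives two facts.
\begin{enumerate}[(i)]
\item Every interval $[x,y]$ of a shellable lattice is shellable, since links of faces in a shellable complex are shellable.
\item A shellable complex is homotopy equivalent to a wedge of spheres, each of which is the boundary of a facet. Hence if $\tilde H_{d}\ne0$, there is a facet of dimension $d$.
\end{enumerate}
On the lattice side, take $x<y$ with $y$ the join of the set $A$ of atoms of $[x,y]$. By join-semidistributivity, distinct subsets of $A$ have distinct joins: the atoms form a canonical join representation of $y$ over $x$. The crosscut theorem then identifies the proper part of $[x,y]$ with the boundary of an $(|A|-1)$-simplex, a sphere of dimension $|A|-2$. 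By (i) and (ii), $[x,y]$ contains a maximal chain of length exactly $|A|$.

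\emph{Construction of the chain.} I would build the long chain greedily. Start at $z_0=\hat 0$. Let $A_0$ be the set of atoms of $[z_0,\hat1]$, put $z_1=\bigvee A_0$, pick a chain of length $|A_0|$ in $[z_0,z_1]$, and iterate in $[z_1,\hat 1]$. This yields a maximal chain of $L$ of length $\sum_i|A_i|$. By the labeling step, it then suffices to show that every $p\in\mathrm{JIrr}(L)$ appears as a label on this chain. I would argue this by induction on intervals, proving the stronger statement:
\begin{quote}
for every interval $[x,y]$ of $L$, $\ell([x,y])$ equals the number of distinct labels occurring on covers inside $[x,y]$.
\end{quote}
This statement is interval-closed by (i), and applying it to $[\hat 0,\hat 1]$ gives the theorem.

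\emph{Main obstacle.} The hard part is the label bookkeeping in the inductive step. One must show that the set of labels of $[x,y]$ is the disjoint union of the labels of $[x,z]$ and of $[z,y]$, where $z$ is the join of the atoms of $[x,y]$. One must also show that $[x,z]$ carries exactly $|A|$ labels, namely those of its atoms, so that the length-$|A|$ chain found above uses all of them.

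A priori $[x,z]$ may contain covers whose labels are not atom labels. To handle this, I would first apply the inductive hypothesis to the proper subintervals $[x,a\vee a']$. Then I would use join-semidistributivity in the form ``$x\vee p=x\vee q$ implies $x\vee(p\wedge q)=x\vee p$'' to show that any extra label would force a nontrivial wedge summand of lower dimension in some subinterval. That would contradict the fact, from the sphere computation, that this subinterval is a single sphere of top dimension.

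If this route fails, the fallback is a more global argument. Assume $\ell(L)<|\mathrm{JIrr}(L)|$, choose a minimal counterexample interval, and derive a homology class in the wrong dimension, using the Möbius function of a join-semidistributive lattice (nonzero exactly on intervals whose top is a join of atoms).
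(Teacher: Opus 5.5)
\paragraph{Review.} Your construction of the long chain fails, and the bookkeeping claim it rests on is false. Take the pentagon $N_5$ with $\hat0\lessdot a\lessdot c\lessdot\hat1$ and $\hat0\lessdot b\lessdot\hat1$. It is semidistributive, shellable and extremal, with $\mathrm{JIrr}(N_5)=\{a,b,c\}$. Its atoms are $a,b$ and $a\vee b=\hat1$, so $z_1=\hat1$ immediately. The only chain of length $|A_0|=2$ is $\hat0\lessdot b\lessdot\hat1$, whose labels are $b$ and $a$. The join-irreducible $c$, which labels $a\lessdot c$, never appears. So $[x,\bigvee A]$ can carry more than $|A|$ labels, and your greedy chain has length $2<|\mathrm{JIrr}(L)|$.

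This is not a fixable detail. The wedge-of-spheres theorem only sees the homology facets of a shelling; here that is the $0$-dimensional facet $\{b\}$ of the proper part. The longest chain may be a facet carrying no homology at all; here it is $\{a,c\}$. So the step ``$\tilde H_d\neq0$ gives a facet of dimension $d$'' can only hand you chains that may be too short. Neither your sketched argument that an extra label forces a lower-dimensional wedge summand, nor a Möbius-function fallback, can repair this.

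Independently, your sphere computation uses the wrong half of semidistributivity. Join-independence of the atoms of an interval follows from meet-semidistributivity; join-semidistributivity only gives the dual statement for coatoms. Consider the join-semidistributive lattice with atoms $a_1,a_2,a_3$, $c=a_1\vee a_3$, $d=a_2\vee a_3$ and $\hat1=a_1\vee a_2=c\vee d$. The top is the join of all the atoms, yet $\{a_1,a_2\}$ and $\{a_1,a_2,a_3\}$ have the same join. The proper part is a path, hence contractible, and $\mu(\hat0,\hat1)=0$. This also contradicts the description of the Möbius function in your fallback.

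The idea you are missing is to use the shelling order itself rather than its homology. In the paper, suppose $F_l\to F_j$ with $F_j\setminus F_l=\{c\}$ and $a\lessdot c\lessdot b$ in $F_j$. Then $a\lessdot c$ and $c\lessdot b$ carry the same labels as the top and bottom covers, respectively, of the segment of $F_l$ from $a$ to $b$, so $\gamma(F_j)\subseteq\gamma(F_l)$. Chaining back through the shelling, $F_1$ carries every label, hence every join-irreducible, and so has length $|\mathrm{JIrr}(L)|$.
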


The converse of the first statement of Theorem~\ref{thm:main} does not hold:
Figure~\ref{fig:counterexample} shows a join-semidistributive and
join-extremal lattice that is not shellable.

Together with known results, Theorem \ref{thm:main} immediately yields the following
equivalence, summarized in Figure \ref{fig:main-implications}.

\begin{theorem}
Let $L$ be a semidistributive lattice. The following properties are equivalent:
\begin{itemize}
\item[$(i)$] $L$ is extremal.
\item[$(ii)$] $L$ is left modular.
\item[$(iii)$] $L$ is EL-shellable.
\item[$(iv)$] $L$ is shellable.
\end{itemize}
\end{theorem}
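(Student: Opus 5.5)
The statement to be proved is the second theorem, the four-way equivalence for a semidistributive lattice $L$. The plan is to close a cycle of implications, all but one of which are already available, as summarized in Figure~\ref{fig:main-implications}.

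The known implications are the following. First, $(i)\Rightarrow(ii)$: an extremal semidistributive lattice is left modular by \cite[Theorem 1.4]{Thomas_2019}. Second, $(ii)\Rightarrow(iii)$: left modularity implies EL-shellability for arbitrary lattices, by Liu's result \cite{LiuLeftmodular}. Third, $(iii)\Rightarrow(iv)$: an EL-labeling yields a shelling of the order complex, by the lexicographic shellability theory of Bj\"orner and Wachs \cite{bjorner1983lexicographically,bjorner1996shellable}.

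The step that closes the cycle is $(iv)\Rightarrow(i)$, which is exactly the second sentence of Theorem~\ref{thm:main}. Since $L$ is semidistributive, it is in particular join-semidistributive, so the first sentence of Theorem~\ref{thm:main} shows that $L$ is join-extremal.

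For meet-extremality we use a duality argument. The dual lattice $L^{\mathrm{op}}$ is again semidistributive, because the definitions of join- and meet-semidistributivity are exchanged under duality. It is also shellable, since its order complex is identical to that of $L$: chains of $L$ and of $L^{\mathrm{op}}$ coincide. Applying Theorem~\ref{thm:main} to $L^{\mathrm{op}}$ shows that $L^{\mathrm{op}}$ is join-extremal. Under duality, join-irreducibles of $L^{\mathrm{op}}$ are exactly the meet-irreducibles of $L$, and the length is unchanged. Hence $\ell(L)=|\mathrm{MIrr}(L)|$, so $L$ is meet-extremal. Together with join-extremality, $L$ is extremal.

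This completes the cycle $(i)\Rightarrow(ii)\Rightarrow(iii)\Rightarrow(iv)\Rightarrow(i)$. The only substantive ingredient is Theorem~\ref{thm:main}, whose proof (the real obstacle) is given in the rest of the paper; the deduction here is formal. The one point to check with care is that the cited implication $(i)\Rightarrow(ii)$ is stated for semidistributive lattices, which is exactly our hypothesis.
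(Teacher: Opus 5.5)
Your argument is correct and follows the paper's route: the cycle $(i)\Rightarrow(ii)\Rightarrow(iii)\Rightarrow(iv)\Rightarrow(i)$, built from \cite[Theorem 1.4]{Thomas_2019}, \cite{LiuLeftmodular}, Bj\"orner--Wachs, and Theorem~\ref{thm:main}. The only divergence is in obtaining meet-extremality. The paper combines join-extremality with the identity $|\mathrm{JIrr}(L)|=|\mathrm{MIrr}(L)|$ for semidistributive lattices \cite[Section 5]{freese1995free}. You instead apply the first sentence of Theorem~\ref{thm:main} to $L^{\mathrm{op}}$, which has the same order complex and is join-semidistributive because $L$ is meet-semidistributive. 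This duality argument is valid and avoids the external counting result. It even shows that every meet-semidistributive shellable lattice is meet-extremal.
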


We start by recalling in Section \ref{sec:edge-label} a well-known edge-labeling by join-irreducibles for join-semidistributive lattices. Then, in Section \ref{sec:inclusionlabels}, we compare the sets of labels of different maximal chains. Finally, we prove Theorem \ref{thm:main} in Section \ref{sec:mainresult}.

\begin{figure}[t]
\centering
\begin{tikzpicture}[
    scale=1,
    every node/.style={circle, fill=black, inner sep=1.6pt}
]

\node (0) at (0,0.5) {};

\node (11) at (-1.5,1.4) {};
\node (9)  at (0,1.4) {};
\node (12) at (1.5,1.4) {};

\node (8)  at (-2.2,2.8) {};
\node (5)  at (-0.7,2.8) {};
\node (6)  at (0.7,2.8) {};
\node (10) at (2.2,2.8) {};

\node (2) at (-0.9,4.2) {};
\node (7) at (0.9,4.2) {};
\node (3) at (2,4.2) {};

\node (4) at (-1.8,5.6) {};

\node (1) at (0,6.5) {};

\draw
(0) -- (9)
(0) -- (11)
(0) -- (12)

(9) -- (5)
(9) -- (6)

(11) -- (5)
(11) -- (7)
(11) -- (8)

(12) -- (6)
(12) -- (10)

(8) -- (2)
(8) -- (4)

(5) -- (2)

(6) -- (3)

(10) -- (3)
(10) -- (7)

(7) -- (4)

(2) -- (1)
(3) -- (1)
(4) -- (1);

\end{tikzpicture}
\caption{A join-semidistributive and join-extremal lattice that is not shellable.}
\label{fig:counterexample}
\end{figure}

\section{Edge-labeling by join-irreducibles}
\label{sec:edge-label}

Let $P$ be a poset. For $x,y\in P$, we say that $y$ \defn{covers} $x$, denoted by $x\lessdot y$, if $x<y$ and there is no $z\in P$ such that $x<z<y$.
For $x\in P$, let $I_P(x)=\{y\in P\mid y\leq x\}$ be the \defn{order ideal} of $P$ generated by $x$. 

We recall the following well-known fact about join-semidistributive lattices.

\begin{lemma}[{\cite[Section 5]{freese1995free}}]
\label{lem:canonicaljoinlabel}
Let $L$ be a join-semidistributive lattice and let $x\lessdot y$. Then $I_L(y)\setminus I_L(x)$ has a minimum element, which is denoted by $\gamma_J(x\lessdot y)$.
Then $\gamma_J(x\lessdot y)\in \mathrm{JIrr}(L)$ and $x\vee\gamma_J(x\lessdot y)=y$.
\end{lemma}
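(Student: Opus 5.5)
The statement to prove is Lemma~\ref{lem:canonicaljoinlabel}: in a join-semidistributive lattice, $I_L(y)\setminus I_L(x)$ has a minimum for every cover $x\lessdot y$. The plan is to use join-semidistributivity to show that this set is closed under meets, then take the meet of all its elements.

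\emph{Closure under meets.} Set $S=I_L(y)\setminus I_L(x)=\{z\le y \mid z\not\le x\}$, which contains $y$ and so is nonempty. For $z\in S$ we have $x< x\vee z\le y$, and since $x\lessdot y$ this forces $x\vee z=y$. So for $z_1,z_2\in S$ we get $x\vee z_1=x\vee z_2=y$. Join-semidistributivity then gives $x\vee(z_1\wedge z_2)=y$. Hence $z_1\wedge z_2\not\le x$ (otherwise the join would be $x\neq y$), and clearly $z_1\wedge z_2\le y$, so $z_1\wedge z_2\in S$. Since $L$ is finite, $S$ is finite, and by induction the meet $\gamma=\bigwedge S$ lies in $S$. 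It is therefore the minimum of $S$, and it satisfies $x\vee\gamma=y$.

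\emph{Join-irreducibility.} First, $\gamma\neq\hat 0$, since $\hat 0\le x$ and $\gamma\not\le x$. Suppose $\gamma$ covers two distinct elements $a,b$. Then $a,b<\gamma\le y$, and by minimality of $\gamma$ neither lies in $S$, so $a,b\le x$. This gives $\gamma=a\vee b\le x$, because $a\vee b$ lies strictly above $a$, is at most $\gamma$, and $a\lessdot\gamma$. That contradicts $\gamma\in S$. So $\gamma$ covers exactly one element, i.e.\ $\gamma\in\mathrm{JIrr}(L)$.

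The only real content is the closure step, which is where join-semidistributivity enters. Everything else is routine finiteness and the use of the cover relation.
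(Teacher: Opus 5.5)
Your proof is correct and follows essentially the same route as the paper. The key step is the same application of join-semidistributivity to $x\vee z_1=x\vee z_2=y$: you phrase it as closure of $S$ under meets and then take $\bigwedge S$, while the paper shows that any two minimal elements of $S$ coincide. Your join-irreducibility argument is likewise the paper's, and your explicit check that $\gamma\neq\hat 0$ makes precise a point the paper glosses over.
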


\begin{proof}
Let $u$ be a minimal element of $I_L(y)\setminus I_L(x)$. Since $u\leq y$ and $u\nleq x$, we have $x<x\vee u\leq y$.
As $x\lessdot y$, it follows that $x\vee u=y$.

Suppose that $u$ and $v$ are two minimal elements of $I_L(y)\setminus I_L(x)$. Then $x\vee u=x\vee v=y$.
By join-semidistributivity, $x\vee(u\wedge v)=y$.
Thus $u\wedge v\in I_L(y)\setminus I_L(x)$. The minimality of $u$ and $v$ gives $u=u\wedge v=v$. This proves that $I_L(y)\setminus I_L(x)$ has a minimum element.

Set $p=\gamma_J(x\lessdot y)$. We already proved that $x\vee p=y$. Suppose that $p=a\vee b$ with $a,b<p$, which is equivalent to $p$ not being a join-irreducible since $L$ is finite. Since $p\nleq x$, at least one of $a$ and $b$ is not below $x$; assume that $a\nleq x$. Then $a\in I_L(y)\setminus I_L(x)$ and $a<p$, contradicting the minimality of $p$. Hence $p$ is join-irreducible.
\end{proof}

As proved in Lemma \ref{lem:canonicaljoinlabel}, a join-semidistributive lattice $L$ has an edge-labeling $\gamma_J$ by join-irreducibles defined by $\gamma_J(x\lessdot y) = \min \Big(I_L(y)\setminus I_L(x) \Big)$.

For a maximal chain $F:\quad x_0\lessdot x_1\lessdot\cdots\lessdot x_n$ of a join-semidistributive lattice $L$,
define
\[
\gamma(F)=\left\{\gamma_J(x_{i-1}\lessdot x_i)\;\middle|\;1\leq i\leq n\right\}.
\]

\begin{lemma}\label{lem:distinctlabels}
The labels of $\gamma_J$ along a maximal chain of a join-semidistributive lattice are pairwise distinct. In particular, the length of a maximal chain $F$ is $|\gamma(F)|$.
\end{lemma}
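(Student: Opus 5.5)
We prove the statement by showing that each label in the chain certifies a "new" element that the earlier part of the chain has not yet reached. Let $F: x_0\lessdot x_1\lessdot\cdots\lessdot x_n$ be a maximal chain of $L$, and write $p_i=\gamma_J(x_{i-1}\lessdot x_i)$ for $1\leq i\leq n$. By Lemma~\ref{lem:canonicaljoinlabel}, $p_i$ is the minimum of $I_L(x_i)\setminus I_L(x_{i-1})$. In particular we record two facts: $p_i\leq x_i$ and $p_i\nleq x_{i-1}$.

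Now take indices $i<j$ and suppose, for contradiction, that $p_i=p_j$. Since $i\leq j-1$ and the chain is increasing, we have $x_i\leq x_{j-1}$. Hence
\[
p_j=p_i\leq x_i\leq x_{j-1},
\]
which contradicts $p_j\nleq x_{j-1}$. So the labels $p_1,\dots,p_n$ are pairwise distinct.

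The map $i\mapsto p_i$ from $\{1,\dots,n\}$ onto $\gamma(F)$ is surjective by the definition of $\gamma(F)$, and we have just shown it is injective. Therefore $|\gamma(F)|=n$, and $n$ is exactly the length of $F$.

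No step here is a real obstacle. The only point to check is that "minimum" in Lemma~\ref{lem:canonicaljoinlabel} gives membership in $I_L(x_i)\setminus I_L(x_{i-1})$, which is immediate. Join-semidistributivity is used only through that lemma, to make the labeling well defined.
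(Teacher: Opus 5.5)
Your proof is correct and matches the paper's argument: for $i<j$, the earlier label satisfies $p_i\leq x_i\leq x_{j-1}$, while the later label satisfies $p_j\nleq x_{j-1}$, so the two labels differ. Your injectivity/surjectivity count giving $|\gamma(F)|=n$ just makes explicit the ``in particular'' clause that the paper leaves implicit.
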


\begin{proof}
Let $x_0\lessdot x_1\lessdot\cdots\lessdot x_n$
be a maximal chain and suppose that $i<j$. We have \mbox{$\gamma_J(x_{i-1}\lessdot x_i)\leq x_i\leq x_{j-1}$},
while $\gamma_J(x_{j-1}\lessdot x_j)\nleq x_{j-1}$.
Thus these two labels are different.
\end{proof}

\section{An inclusion of edge-label sets}
\label{sec:inclusionlabels}

Let $F$ and $G$ be maximal chains of $L$. We write $F\rightarrow G$ if $|F\cap G|=|G|-1$. The proof of the following result is illustrated in Figure \ref{fig:perspectivecovers}.

\begin{figure}[H]
\centering
\begin{tikzpicture}[scale=1.5]
\draw[blue] (0,0)--(1,2)--(0,4);
\draw[red] (-0.8,0.8)--(0,0);
\draw[red] (-0.8,3.2)--(0,4);
\draw[dashed,red] (-0.8,3.2)--(-0.8,0.8);

\node[red] at (-1.5,2) {$F$};
\node[blue] at (1.7,2) {$G$};

\draw (0,0) node[below]{$a=y_0$};
\draw (0,0) node{$\bullet$};
\draw (0,4) node[above]{$b=y_k$};
\draw (0,4) node{$\bullet$};
\draw (-0.8,0.8) node[left]{$y_1$};
\draw (-0.8,0.8) node{$\bullet$};
\draw (-0.8,3.2) node[left]{$y_{k-1}$};
\draw (-0.8,3.2) node{$\bullet$};
\draw (1,2) node[right]{$c$};
\draw (1,2) node{$\bullet$};
				
\draw (0.5,1) node[below right]{$p$};
\draw (0.5,3) node[above right]{$q$};
\draw (-0.4,0.4) node[below left]{$q'$};
\draw (-0.4,3.6) node[above left]{$p'$}; 
\end{tikzpicture}
\caption{Illustration of the proof of Lemma \ref{lem:gammarenversearrow}.}
\label{fig:perspectivecovers}
\end{figure}

\begin{lemma}\label{lem:gammarenversearrow}
Let $F$ and $G$ be maximal chains of a join-semidistributive lattice $L$. If $F\rightarrow G$, then $\gamma(G)\subseteq\gamma(F)$.
\end{lemma}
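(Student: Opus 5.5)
The plan is to compare labels locally around the single element where $G$ leaves $F$. Since $|F\cap G|=|G|-1$, there is exactly one element $c\in G\setminus F$. The bottom and top of $L$ lie in every maximal chain, so $c$ has a lower neighbour $a$ and an upper neighbour $b$ in $G$, with $a\lessdot c\lessdot b$ and $a,b\in F$. Because $F$ is a maximal chain containing $a$ and $b$, its portion between them is a saturated chain $a=y_0\lessdot y_1\lessdot\cdots\lessdot y_k=b$, as in Figure~\ref{fig:perspectivecovers}. Every cover of $G$ other than $a\lessdot c$ and $c\lessdot b$ is also a cover of $F$, so those labels lie in $\gamma(F)$ trivially. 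It remains to show that $p=\gamma_J(a\lessdot c)$ and $q=\gamma_J(c\lessdot b)$ both occur among the labels $\gamma_J(y_{i-1}\lessdot y_i)$. Throughout we use Lemma~\ref{lem:canonicaljoinlabel}: for a cover $x\lessdot y$, the label $\gamma_J(x\lessdot y)$ lies below every element of $I_L(y)\setminus I_L(x)$.

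\emph{The label $p$.} Since $p\le c\le b=y_k$ and $p\nleq a=y_0$, let $i\ge1$ be the smallest index with $p\le y_i$. Set $r=\gamma_J(y_{i-1}\lessdot y_i)$. Then $p\in I_L(y_i)\setminus I_L(y_{i-1})$, so $r\le p$. Conversely, $r\le p\le c$, and $r\nleq a$ because $a\le y_{i-1}$ and $r\nleq y_{i-1}$. Hence $r\in I_L(c)\setminus I_L(a)$, which gives $p\le r$. Therefore $p=r\in\gamma(F)$.

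\emph{The label $q$.} This is the step needing join-semidistributivity, since the symmetric argument fails: a label found below $q$ need not avoid $c$. Instead, let $j$ be the smallest index with $y_j\nleq c$. It exists because $y_k=b\nleq c$, and $j\ge1$ because $y_0=a\le c$. Since $c\lessdot b$ and $c<c\vee y_j\le b$, we get $c\vee y_j=b=c\vee q$. Join-semidistributivity then gives $c\vee(q\wedge y_j)=b$, so $q\wedge y_j\in I_L(b)\setminus I_L(c)$. Minimality of $q$ forces $q\le q\wedge y_j$, that is, $q\le y_j$. Moreover $q\nleq y_{j-1}$, since $y_{j-1}\le c$. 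So $q\in I_L(y_j)\setminus I_L(y_{j-1})$, and $r'=\gamma_J(y_{j-1}\lessdot y_j)\le q$.

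For the reverse inequality, note that $r'\le y_j\le b$. Also $r'\nleq c$: otherwise $y_j=y_{j-1}\vee r'\le c$, contradicting the choice of $j$. Thus $r'\in I_L(b)\setminus I_L(c)$, so $q\le r'$. Hence $q=r'\in\gamma(F)$, which completes the inclusion $\gamma(G)\subseteq\gamma(F)$.
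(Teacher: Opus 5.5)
Your proof is correct. It uses the same reduction as the paper: isolate the unique $c\in G\setminus F$, then show that $p=\gamma_J(a\lessdot c)$ and $q=\gamma_J(c\lessdot b)$ occur on the segment $a=y_0\lessdot\cdots\lessdot y_k=b$ of $F$. Where you differ is in how you find the covers that carry these labels.

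The paper pins them down explicitly. Using $c\notin F$ together with $a\lessdot c\lessdot b$, it rules out $p\leq y_{k-1}$ and $y_1\leq c$. It concludes $p=\gamma_J(y_{k-1}\lessdot b)$ and $q=\gamma_J(a\lessdot y_1)$, so the labels sit on the perspective covers at the two ends of the segment.

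Your extremal indices (first $i$ with $p\leq y_i$, first $j$ with $y_j\nleq c$) turn out to be exactly $i=k$ and $j=1$ here. However, your argument never needs $c\notin F$, nor $c\lessdot b$ for $p$, nor $a\lessdot c$ for $q$. It therefore proves more general statements:
\begin{itemize}
\item $\gamma_J(a\lessdot c)$ appears on every saturated chain from $a$ to any $b\geq c$;
\item $\gamma_J(c\lessdot b)$ appears on every saturated chain from any $a\leq c$ to $b$.
\end{itemize}
The paper's version, in exchange, gives the sharper information of which covers of $F$ carry the labels.

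One correction to your commentary: the appeal to join-semidistributivity in the $q$ step is redundant. Since $y_j\leq b$ and $y_j\nleq c$, you already have $y_j\in I_L(b)\setminus I_L(c)$, hence $q\leq y_j$ directly by Lemma~\ref{lem:canonicaljoinlabel}. So, as in the paper, join-semidistributivity is used only through the existence of the labeling $\gamma_J$, and this is not really ``the step needing join-semidistributivity''.
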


\begin{proof}
Suppose that $F\rightarrow G$, meaning there exists a unique $c\in G\setminus F$ and $F\cap G=G\setminus\{c\}$. Let $a,b\in G$ be such that $a\lessdot c\lessdot b$.
Since $a,b\in F$, the elements between $a$ and $b$ along $F$ form a chain $a=y_0\lessdot y_1\lessdot\cdots\lessdot y_k=b$.
Every cover relation of $G$, except $a\lessdot c$ and $c\lessdot b$, is also a cover relation of $F$. It is therefore enough to show that the labels of these two covers occur along the chain $y_0\lessdot y_1\lessdot\cdots\lessdot y_k$.
To prove this, we now prove that $\gamma_J(y_{k-1}\lessdot b)=\gamma_J(a\lessdot c)$ and 
$\gamma_J(a\lessdot y_1)=\gamma_J(c\lessdot b)$.

Set $p=\gamma_J(a\lessdot c)$ and $p'=\gamma_J(y_{k-1}\lessdot b)$.
By Lemma \ref{lem:canonicaljoinlabel}, we have $a\vee p=c$. Suppose that $p\leq y_{k-1}$. Then $c=a\vee p\leq y_{k-1}<b$.
Since $c\lessdot b$, this forces $y_{k-1}=c$, which is impossible because $c\notin F$. Hence $p\in I_L(b)\setminus I_L(y_{k-1})$, so $p'\leq p$. Since $p'\leq p\leq c$ and $p'\nleq a$ (because $p'\nleq y_{k-1}$), we have $p'\in I_L(c)\setminus I_L(a)$. Then $p\leq p'$. Hence $p=p'$.

Now set $q=\gamma_J(c\lessdot b)$ and $q'=\gamma_J(a\lessdot y_1)$.
By Lemma \ref{lem:canonicaljoinlabel}, we have $a\vee q'=y_1$. Suppose that $q'\leq c$. Then $a<  y_1=a\vee q'\leq c$.
Since $a\lessdot c$, this forces $y_1=c$, again contradicting $c\notin F$. Hence $q'\in I_L(b)\setminus I_L(c)$,
so $q\leq q'$. Since $q\leq q'\leq y_1$ and $q\nleq a$ (because $q\nleq c$), we have $q\in I_L(y_1)\setminus I_L(a)$.
Then $q'\leq q$. Hence $q=q'$.
\end{proof}

\begin{remark}
In the proof of Lemma \ref{lem:gammarenversearrow}, $p'=p$ and 
$q'=q$ also follow from \cite[Lemma 3.3]{Muhle_2022} using the fact that the covers $y_{k-1}\lessdot b$ and $a\lessdot c$ are perspective, as are $a\lessdot y_1$ and $c\lessdot b$.
\end{remark}

\section{Proof of the main result}
\label{sec:mainresult}

The following result follows immediately from the definition of a shelling.

\begin{lemma}\label{lem:arrowadjacency}
If $F_1,F_2,\dots,F_k$ is a shelling of $\Delta(L)$, then for every $2\leq j\leq k$, there exists $i<j$ such that $F_i\rightarrow F_j$.
\end{lemma}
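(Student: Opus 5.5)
The lemma should follow by unpacking the shelling condition for $j\geq 2$ and choosing $i=1$ in it. Fix $j$ with $2\leq j\leq k$. Applying the definition of a shelling to the pair $(1,j)$, which is allowed because $1<j$, gives an index $l<j$ with
\[
F_1\cap F_j\subseteq F_l\cap F_j\quad\text{and}\quad |F_l\cap F_j|=|F_j|-1 .
\]
I would then take $i=l$. The second condition is exactly $F_l\rightarrow F_j$, by the definition of $\rightarrow$ given before Lemma \ref{lem:gammarenversearrow}. Since $l<j$, this index has the required form.

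One should also check that $i\neq j$ is automatic, although nothing in the statement depends on it. This holds because $l<j$ is strict. It also follows from the cardinality condition: $|F_j\cap F_j|=|F_j|\neq |F_j|-1$, since facets are nonempty.

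The only point that needs care is that the facets of $\Delta(L)$ are precisely the maximal chains of $L$. This is noted in the introduction, and it means that $\rightarrow$, which is defined on maximal chains, makes sense for the $F_i$. I do not expect any real obstacle. The argument is a direct specialization of the shelling axiom, and the inclusion part of that axiom is not needed here.
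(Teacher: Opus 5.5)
Your proposal is correct and matches the paper, which simply states that the lemma follows immediately from the definition of a shelling. Applying the shelling condition to the pair $(1,j)$ yields $l<j$ with $|F_l\cap F_j|=|F_j|-1$, which is exactly $F_l\rightarrow F_j$, as you say.
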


We can now prove our main result.

\begin{proof}[Proof of Theorem \ref{thm:main}]
Suppose that $L$ is join-semidistributive and shellable. Let us prove that $L$ is join-extremal.
The result is immediate if $L$ is a chain, so we assume that $L$ has at least two maximal chains. Let $F_1,F_2,\dots,F_k$
be a shelling of $\Delta(L)$. By Lemma \ref{lem:arrowadjacency}, for every $j>1$ there exists $i<j$ such that $F_i\rightarrow F_j$. Lemma \ref{lem:gammarenversearrow} then gives $\gamma(F_j)\subseteq\gamma(F_i)$.
By induction on $j$, it follows that $\gamma(F_j)\subseteq\gamma(F_1)$
for every $j\in \{1,2,\dots,k\}$. 

Let $p\in\mathrm{JIrr}(L)$. The cover $p_*\lessdot p$ can be extended to a maximal chain $F_j$. Since \mbox{$I_L(p)\setminus I_L(p_*)=\{p\}$},
we have $\gamma_J(p_*\lessdot p)=p$. Thus $p\in\gamma(F_j)$, and consequently $p\in\gamma(F_1)$. This proves $\mathrm{JIrr}(L)\subseteq\gamma(F_1)$.
The reverse inclusion follows from Lemma \ref{lem:canonicaljoinlabel}, and hence $\gamma(F_1)=\mathrm{JIrr}(L)$.
Then by Lemma \ref{lem:distinctlabels}, the length of $F_1$ is $|\mathrm{JIrr}(L)|$. 
Moreover, for any maximal chain $F$, the same lemma shows that its length is $|\gamma(F)|\leq |\mathrm{JIrr}(L)|$.
Therefore $\ell(L)=|\mathrm{JIrr}(L)|$, so $L$ is join-extremal.

Now assume that $L$ is semidistributive and shellable. We just proved that $L$ is join-extremal. For a semidistributive lattice, it is known that $|\mathrm{JIrr}(L)|=|\mathrm{MIrr}(L)|$ \cite[Section 5]{freese1995free}.
Thus $\ell(L)=|\mathrm{JIrr}(L)|=|\mathrm{MIrr}(L)|$, and $L$ is extremal.
\end{proof}

\section*{Acknowledgements}

The authors acknowledge Gewu Intelligence Lab for providing access to its collaborative research infrastructure and for helpful advice regarding the publication of this work.

\section*{Disclosure of automated assistance}

OpenAI GPT-5.6 Sol was used in proof exploration and language
editing. The arguments in this paper are self-contained
and the authors assume full responsibility for the final manuscript.

\bibliographystyle{alpha}
\bibliography{references}

\end{document}